\newtheorem{tm}{Theorem}
\newtheorem{defi}[tm]{Definition}
\newtheorem{rem}[tm]{Remark}
\newtheorem{lm}[tm]{Lemma}
\newtheorem{cor}[tm]{Corollary}
\newtheorem{prop}[tm]{Proposition}
\newcommand{\al}{\alpha}
\newcommand{\la}{\lambda}
\newcommand{\bC}{\mathbb C}
\newcommand{\bR}{\mathbb R}
\newcommand{\C}{\mathcal C}
\newcommand{\eps}{\epsilon}
\newcommand{\field}[1]{\mathbb{#1}}
\newcommand{\R}{\field{R}}
\newcommand {\CC} {\mathcal C}
\begin{document}
\title
[On global non-oscillation of linear ode]{On global non-oscillation  of linear ordinary  differential equations with polynomial coefficients}

\author[D.~Novikov]{Dmitry Novikov }
\noindent 
\address{Department of Mathematics, Weizmann Institute of Science, Rehovot, 76100 Israel}
             \email{dmitry.novikov@weizmann.ac.il}

\author[B.~Shapiro]{Boris Shapiro}
\noindent 
\address{Department of Mathematics, Stockholm University, SE-106 91
Stockholm,
         Sweden}
\email{shapiro@math.su.se}

\begin{abstract}  In this  note we show that a  linear ordinary differential equation with polynomial coefficients  is globally non-oscillating in $\bC P^1$ if and only if it is Fuchsian, and at every its singular point any two  distinct characteristic exponents  have  distinct real parts.  As a byproduct of our study, we obtain a new explicit upper bound for the number of zeros of exponential polynomials in a horizontal strip. 
 \end{abstract}

\subjclass[2010] {Primary 34M03, Secondary 	34M10}

\keywords{Fuchsian differential equations, global non-oscillation, disconjugacy domain}

\date{}
\maketitle 

\section{Introduction} 
Let us recall the classical notions of disconjugacy and non-oscillation of a linear ordinary differential equation, see e.g. \cite {Co}. 
\begin{defi} {\rm A linear ordinary differential equation of order $k$ 
\begin{equation}\label{eq:main}
a_k(z)y^{(k)}+a_{k-1}(z)y^{(k-1)}+...+a_0(z)y=0,
\end{equation}
with continuous coefficients $a_j(z),\; j=0,\dots , k$ 
defined in a neighborhood of some simply-connected subset $I$ of $\R$ or $\bC,$ is called {\it disconjugate} (resp. {\it non-oscillating}) in $I,$ if every its nontrivial solution has in $I$ at most $k-1$ zeros (resp. finitely many zeros)  counted with multiplicities.}
\end{defi}

Observe that every equation \eqref{eq:main}   is disconjugate in any  sufficiently small interval in $\bR$ (resp. any sufficiently small  disk in $\bC$) centered at an arbitrary point $z_0\in \bR$ (resp. $z_0\in \bC$)  such that  $a_k(z_0)\neq 0.$ Analogously, every equation \eqref{eq:main}   is non-oscillating in any compact simply-connected set free from the roots of $a_k(z)$.  

\medskip
The study of different aspects and criteria of disconjugacy  and non-oscillation has been an active topic   in the past. While there exist satisfactory criteria of disconjugacy for the second order equations, the situation with the higher order equations is more complicated.  A number of necessary/sufficient conditions of disconjugacy for subsets of $\bR$ and $\bC$ are known in the literature mostly dating back at least four decades, see e.g. \cite {Ne}, \cite{La2}, \cite{Lev}. 
In the case of equations of order $2,$ disconjugacy is closely related to Sturm separation theorems; for higher order equations there is a related version of multiplicative Sturmian theory developed in \cite{Sh}. 

\medskip 
In this paper, for   a linear differential equation with polynomial coefficients, we  introduce the notion of its global non-oscillation  in  $\bC P^1$ by which we mean its classical non-oscillation in  an arbitrary {\it open contractible} domain obtained after the removal from $\bC P^1$ of an appropriate cut connecting all the singular points.  
Although oscillation/non-oscillation in the complex domain have been studied  since the  1920's,  (see e.g. \cite {Hi}), the notion of global non-oscillation seems to be new.  As an experienced reader can easily guess, the main motivation for our consideration comes from the second part of Hilbert's 16th problem. 

\medskip
 Consider a linear homogeneous differential equation  
\begin{equation}\label{eq:pol}
P_k(z)y^{(k)}+P_{k-1}(z)y^{(k-1)}+...+P_0(z)y=0,
\end{equation} 
with polynomial coefficients $P_k(z), P_{k-1}(z),\dots,P_0(z)$,  and $GCD(P_k,P_{k-1},\dots, P_0)=1$. Let $S$ be the set of all singular points of \eqref{eq:pol} in $\bC P^1$, i.e.,  the set of all roots of $P_k(z)$ (together with  $\infty$ if some of the limits $\lim_{z\to \infty} z^jP_{k-j}(z)/P_k(z),$ $j=0,\dots, k$ is infinite). For a given equation~\eqref{eq:pol}, let $d$ denote the cardinality of $S$.  

\begin{defi}{\rm   A system $\overline \CC:=\{\CC_j\}_{j=1}^{d-1}$ of smooth Jordan curves in $\bC P^1,$ each of them connecting a pair of distinct singular points, is called {\it an admissible cut} for equation \eqref{eq:pol} if and only if:   a) for any $i\neq j$, the intersection  $\CC_i\cap \CC_j$ is either empty or consists of their common endpoint; b) the union $ \cup_{j=1}^{d-1} \CC_j$ is topologically a tree in $\bC P^1$, i.e., the complement $\bC P^1\setminus \cup_{j}\CC_j$ is contractible; c) each $\CC_j$ has a well-defined tangent vector at each of its two endpoints. 
}
\end{defi} 

In particular, there exist admissible cuts consisting of straight segments connecting the singular points of \eqref{eq:pol}. 

\begin{defi} {\rm Equation \eqref{eq:pol} is called {\it globally non-oscillating} if, for any its admissible cut $\overline \CC$,   every  its nontrivial solution has finitely many zeros in   $\bC\setminus \overline \CC.$  } 
\end{defi}

The main result of this paper is the following criterion of  global non-oscillation.  

\begin{tm}\label{th:main} 
Equation \eqref{eq:pol} is globally non-oscillating if and only if: 
\begin{itemize}
\item[(i)]  it is Fuchsian;
\item [(ii)] at each  singular point all distinct characteristic exponents have pairwise distinct real parts.
\end{itemize} 
\end{tm}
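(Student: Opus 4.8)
The plan is to prove both directions by analyzing the local and asymptotic behavior of solutions.
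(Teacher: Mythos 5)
Your proposal is not a proof; it is a one-sentence statement of intent, and as such it has a gap that is the entire argument. Nothing in it identifies what "analyzing local and asymptotic behavior" would actually consist of, nor why it would yield the conclusion in either direction. For the necessity direction, what is actually needed is to exhibit concrete oscillating solutions: near an irregular (non-Fuchsian) singularity one must show that almost every solution has infinitely many zeros in a punctured, cut neighborhood, and near a Fuchsian singularity whose distinct exponents $a+b_1I$, $a+b_2I$ share the same real part one must produce the solution with leading term $z^{a+\frac{(b_1+b_2)I}{2}}\cos\left(\frac{b_1-b_2}{2}\ln z\right)$, whose zeros accumulate at the singular point. Neither construction is hinted at in your plan.

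The deeper problem is the sufficiency direction, where "asymptotic behavior of solutions" is genuinely insufficient as stated. Knowing the asymptotics of a \emph{basis} of solutions near a Fuchsian point does not by itself bound the number of zeros of an arbitrary \emph{linear combination}: cancellation between terms of comparable modulus can create zeros, and one must control this uniformly over the whole solution space. This is exactly where the paper has to work hard: it passes to the logarithmic chart, reduces (up to a controlled perturbation) to equations with constant coefficients, and then proves a new quantitative bound (Proposition~\ref{prop:constant}) on the zeros of exponential polynomials $\sum_j A_j(z)e^{\la_j z}$ in an infinite horizontal strip. That proof requires the notion of the domain of single-term dominance $G(y,\al)$, a concave-majorant (tropical-style) analysis of which term dominates where, a covering of the complement by finitely many boxes of controlled total width, and the Khovanskii--Yakovenko zero bound (Theorem~\ref{th:KY}) inside those boxes; away from the singularities one also needs the complex de la Vall\'ee Poussin estimate (Lemma~\ref{lm:ValPous}) to handle the compact part of the cut sphere. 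None of these mechanisms, nor any substitute for them, appears in your proposal, so it cannot be evaluated as even an outline of a correct argument.
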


\begin{rem} \label{rm:im}
{\rm One can easily notice that \eqref{eq:pol} is globally non-oscillating   if and only if some (and therefore any) domain  $\bC P^1\setminus \overline \CC$ can be covered by  finitely many open disconjugacy domains. 
 Observe that if one knows such a covering, then one gets an immediate upper bound for the total  number of zeros of nontrivial  solutions of \eqref{eq:pol} in  $ \bC P^1\setminus \overline \CC$. Namely, if the number of  open disconjugacy  domains covering $\bC P^1\setminus \overline \CC$ equals  $l,$ then 
any nontrivial solution of \eqref{eq:pol} has there at most $(k-1)l$ zeros counted with multiplicitties}.
\end{rem}

\medskip
In view of Remark~\ref{rm:im} the following problem is of fundamental importance. 

\medskip \noindent
{\bf Main Problem.}
Given an arbitrary equation~\eqref{eq:pol} satisfying the assumptions of Theorem~\ref{th:main}, estimate from above the number of disconjugacy domains which can form  an open covering of $\bC P^1\setminus \overline \CC$, for some admissible cut $\CC$.

\medskip
Observe that in  case of a Schr\"odinger equation  
$$-y^{\prime\prime} +P(z)y=0$$
with a polynomial potential $P(z),$  there is a classical  construction of  such coverings using the Schwarzian derivative of two linearly independent solutions of the latter equation which goes back to R.~Nevanlinna, \cite{Nev}.

\begin{rem} \label{rm:im}
{\rm
Let us also mention that Proposition~\ref{prop:constant} below, which is an important technical tool  used to prove Theorem~\ref{th:main}, is a new result in the classical area of the upper bounds for  the number of zeros of exponential polynomials and, therefore,  it is of independent interest. Such upper bounds are required in a wide range of mathematical disciplines, from applied mathematics to number theory. Essential progress in this area has been made in the 70's in the papers \cite{Tij}, \cite{VVT}, \cite{Vo}. But, to the best of our knowledge, in all the previous literature one only considered compact subdomains in $\bC$, mainly disks and rectangles, while Proposition~\ref{prop:constant} considers the case of an infinite strip. 
}
\end{rem}

\medskip 
\noindent 
{\em Acknowledgements.} The second author is grateful to the  Department of Mathematics and Computer Science of the  Weizmann Institute of Science for the  hospitality in January 2010 and February 2015 when this project was initiated and carried out.  The first author wants to thank G.~Binyamini for many discussions of the upper bounds of the number of zeros of Fuchsian equations over the years. 

\section{Proofs}

Our proof of Theorem~\ref{th:main} consists of several steps. 

\medskip\noindent
Step 1. The necessity of Conditions (i) and (ii) for  global non-oscillation of equation~\eqref{eq:pol}. 

Indeed, if \eqref{eq:pol} has a non-Fuchsian singularity at $p\in \bC P^1$, then, for any sufficiently small $\eps>0,$ almost any solution of  \eqref{eq:pol} has infinitely many zeros in the $\eps$-neighborhood of $p$ with a removed straight segment connecting $p$ with some point on the bounding circle. This property  contradicts to global non-oscillation.      
To finish Step 1,  consider a Fuchsian singularity of  \eqref{eq:pol} with two distinct characteristic exponents of the form $a+b_1I$ and $a+b_2I$. Then there exists a solution of \eqref{eq:pol} with the leading term $z^{a+\frac{(b_1+b_2)I}{2}}\cos \left(\frac{b_1-b_2}{2}\ln z\right).$
 Such a solution  has infinitely many zeros accumulating to $p$ which are located close to the horizontal  line passing through $p$.  This again  contradicts to global non-oscillation.     

\medskip \noindent
Step 2. Reduction to small neighborhoods of singular points. 

For any sufficiently small $\eps>0,$ construct a simply-connected domain $U_\eps\subset \bC P^1$ by: a) taking the large disk  $\{|z|<\eps^{-1}\}$ with  the $\eps$-neighborhoods of all zeros of 
$P_k$ removed, b) making cuts by straight segments between the bounding circles so that the obtained domain becomes contractible. 

\medskip 
The following complex analogue of the classical de la Vall\'ee Poussin theorem \cite{poussin} is proved in \cite[Theorem 2.6, Corollary 2.7]{Y}.

\begin{lm}\label{lm:ValPous}
Consider a homogeneous
\emph{monic} linear ordinary differential equation with holomorphic
coefficients
\begin{equation*}
    y^{(k)}+a_{k-1}(t)\,y^{(k-1)}+\cdots+a_0(t)\,y=0, \qquad
    t\in \bC.
\end{equation*}
 Then the variation of the argument of any solution $y(t)$ along a circular
arc $\gamma$ of a known length is explicitly bounded in terms of the
\emph{uniform upper bounds} $A_i=\sup_{t\in\gamma}|a_i(t)|$,
$i=0,\dots,k-1$. 
\end{lm}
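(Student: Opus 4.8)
The plan is to reduce the assertion about the variation of the argument of a solution to a purely complex-analytic statement about holomorphic functions of controlled growth, and then to feed in the growth estimate supplied by the equation itself. Write $L$ for the length of $\gamma$ and put $A=\sum_{i=0}^{k-1}A_i$; all constants below are to depend only on $k$, $L$ and $A$.

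First I would pass to the companion system. Setting $Y=(y,y',\dots,y^{(k-1)})^{\top}$, the equation becomes $Y'=B(t)Y$ with $B$ the companion matrix whose only nonconstant entries are $-a_0,\dots,-a_{k-1}$, so that $\|B(t)\|\le 1+A$ on $\gamma$. Gronwall's inequality applied along paths inside a disk $D$ containing $\gamma$, of diameter comparable to $L$, yields a two-sided estimate $\exp(-C(1+A)L)\le \|Y(t)\|/\|Y(t_0)\|\le \exp(C(1+A)L)$; in particular the growth ratio of $\|Y\|$, and after choosing a base point $t_{\ast}$ where $y$ does not vanish, the ratio $\max_{\partial D}|y|/|y(t_{\ast})|$, is bounded by an explicit function of $A$ and $L$. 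This is the only step in which the differential equation is used.

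Second comes the complex-analytic core, which is also the main obstacle. I would invoke a Cartan-type estimate: if $f$ is holomorphic on $D$ and does not vanish identically, then the variation of $\arg f$ along a sub-arc of a concentric circle is bounded by a constant multiple of $\log\big(\max_{\partial D}|f|/|f(t_{\ast})|\big)$ plus a constant. The delicate points are that $\gamma$ is only an \emph{arc} rather than a closed contour, so the clean argument-principle identity $\mathrm{Var}_{\partial D}\arg f=2\pi\cdot\#\{\text{zeros in }D\}$ is unavailable and must be replaced by a one-sided Cartan/Jensen bound valid on arcs, and that the presence of zeros of $y$ on or near $\gamma$ must be absorbed into the estimate rather than excluded. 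Once this lemma is in place, combining it with the first step gives $\mathrm{Var}_\gamma \arg y\le C(1+A)L+C$, which is the asserted explicit bound in terms of the $A_i$; Corollary~2.7 would then follow by specializing the geometry of $\gamma$.
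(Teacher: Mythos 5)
The paper does not prove this lemma at all: it quotes it from \cite[Theorem~2.6, Corollary~2.7]{Y}. The proof there is intrinsically \emph{one-dimensional}: the equation is restricted to the arc, the arc is subdivided into pieces that are short relative to the coefficient bounds, on each short piece a Rolle--de la Vall\'ee Poussin type argument bounds the rotation of any nonzero solution by $O(k)$, and the contributions are summed. Only the values of the $a_i$ \emph{on} $\gamma$ ever enter. This matters because the statement you are asked to prove has exactly this one-dimensional hypothesis: $A_i=\sup_{t\in\gamma}|a_i(t)|$. Both pillars of your argument, by contrast, need two-dimensional control: Gronwall along paths in a disk $D\supset\gamma$ needs $\sup_D\|B\|$, and the Cartan/Bernstein-index bound for the variation of the argument needs growth control of $y$ on a full neighborhood of $\gamma$. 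A holomorphic coefficient bounded by $A_i$ on $\gamma$ can be arbitrarily large at any fixed positive distance from $\gamma$ (e.g.\ $\epsilon\cos(Nt)$ on a real segment, with $N$ huge), so $\sup_D|a_i|$ is bounded by \emph{no} function of the $A_i$; shrinking $D$ to a thin tube does not help, because the admissible thickness depends on the modulus of continuity of the $a_i$, not on the $A_i$, and the Cartan-type constant blows up as the tube degenerates. So the bound $C(1+A)L+C$ is not justified: your scheme proves a genuinely weaker lemma in which $A_i=\sup_{t\in D}|a_i(t)|$ over a two-dimensional domain. (That weaker version would, as it happens, still suffice for the paper's Step~2, where the coefficients are controlled on all of $U_\eps$, but it is not the stated lemma.)

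There is also a secondary gap inside your first step. Gronwall gives a two-sided bound on the vector norm $\|Y\|$, not on $|y|$, and a base point $t_\ast$ chosen merely ``where $y$ does not vanish'' can lie near a zero of $y$, making $|y(t_\ast)|$ arbitrarily small while $\|Y(t_\ast)\|$ stays comparable to its maximum; the ratio $\max_{\partial D}|y|/|y(t_\ast)|$ is then unbounded. To close this one must choose $t_\ast$ as a near-maximum point of $|y|$ on an inner disk $D''\Subset D$ and add a Cauchy-estimate step ($\max_{D''}\|Y\|\le C\,\max_{D}|y|$), which together with Gronwall yields $\max_{D''}|y|\ge e^{-C}\max_{\partial D}|y|$. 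This is fixable, but it is missing as written. Finally, note that the ``Cartan-type estimate'' you invoke is precisely the Khovanskii--Yakovenko lemma bounding the variation of argument of a holomorphic function along an arc by its Bernstein index (reference \cite{KY} of this paper), so the complex-analytic core of your proof is also outsourced; that is legitimate in itself, but it ties your argument irrevocably to the two-dimensional hypotheses that lemma requires, which is exactly the mismatch described above.
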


Lemma~\ref{lm:ValPous} implies an explicit upper bound $B(\eps)$ for the number of zeros of any solution of  \eqref{eq:pol} in $U_\eps$. More exactly, the upper bound will depend 
on the upper bounds on the restrictions of $a_j$ to $\partial U_\eps$. The latter are polynomial in $\eps^{-1}$ if the coefficients $a_j$ are polynomials, so 
the upper bound is also polynomial in $\eps^{-1}$.

\begin{rem} {\rm Observe that, for any admissible system of cuts $\overline \CC$ and any sufficiently small $\eps$, the domain $\bC P^1\setminus \overline \C$ can be 
covered by 
finitely many $U_\eps$ (choosing different straight lines connecting the bounding circles) and finitely many  sectors of finite radii  centered at the singular points of \eqref{eq:pol}. This observation reduces  the proof of Theorem~\ref{th:main}  to providing finite upper bounds for the 
number of zeros of 
solutions of \eqref{eq:pol} in these sectors, see below.}
\end{rem}

\medskip \noindent
Step 3.  Equations with constant coefficients. ("Reduction``  to the case of equations with constant coefficients in a neighborhood of a Fuchsian singularity is obtained by using the logarithmic chart centered at  the singularity.  See also Steps 4-5.)

\begin{prop}\label{prop:constant}
 For any $\al\ge 0$ and for any equation 
\begin{equation}\label{eq:const}
EQ:\quad  a_ky^{(k)}+a_{k-1}y^{(k-1)}+...+a_0y=0,\; a_j\in \bC,\; a_k\neq 0
\end{equation}
such that all its  distinct characteristic roots  have distinct real parts, 
\begin{enumerate}
\item there exists an upper bound   $\sharp(EQ,\al)$   for  
the number of zeros   of all nontrivial solutions of \eqref{eq:const} in the horizontal strip $\{\Pi_\al:  | \Im(z)\le 
\al|\}$. (Here zeros are 
counted  with multiplicities.) 
\item in the generic case when all roots $\la_j,\; j=1,\dots , k$ of the characteristic equation of (\ref{eq:const}) are 
simple, with $\Re\la_1<\Re\la_{1}<\dots <\Re\la_k$, we get  
\begin{equation}\label{eq: simple exponents bound}
\sharp(EQ,\al)\le (k-1)^2+\frac 2 \pi 
(k-1)\mathcal{L}(\operatorname{EQ})\left[\al(\Xi+2)+\Theta \log4\right],
\end{equation}
where $\mathcal{L}(\operatorname{EQ})$ is the length of the shortest polygonal path
passing through all $\la_j$ and
\begin{equation*} 
\Theta:= \max_{1\le j \le  k-1}{|\Re 
(\la_j)-\Re(\la_{j+1})|}^{-1}, \qquad \Xi:=\max_{1\le j\le 
k-1}\left|\frac{\Im\la_j-\Im\la_{j+1}}{\Re\la_j-\Re\la_{j+1}}\right|.
\end{equation*}
\end{enumerate}
\end{prop}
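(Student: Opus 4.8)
The plan is to realize every solution of \eqref{eq:const} as an exponential polynomial and to count its zeros in $\Pi_\al$ by the argument principle applied to a large but finite rectangle, exploiting the hypothesis that the characteristic roots have pairwise distinct real parts. Write the distinct characteristic roots as $\la_1,\dots,\la_r$ with multiplicities $m_1,\dots,m_r$ ($\sum m_i=k$), labelled so that $\Re\la_1<\dots<\Re\la_r$; a general nontrivial solution then has the form $y(z)=\sum_{j} p_j(z)e^{\la_j z}$ with $\deg p_j<m_j$, reducing in the simple-root case of part (2) to $y(z)=\sum_{j=1}^k c_j e^{\la_j z}$. On $\Pi_\al$ one has $|e^{\la_j z}|=e^{\Re\la_j\cdot\Re z-\Im\la_j\cdot\Im z}$, so for $\Re z$ large the single term with the largest real part $\Re\la_r$ dominates the sum, while for $\Re z$ large and negative the term with real part $\Re\la_1$ dominates (the polynomial factors do not affect the leading exponential growth). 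Hence there are $x_-<x_+$, depending on the solution, outside of which $y$ cannot vanish on $\Pi_\al$, and all zeros are confined to the rectangle $R=[x_-,x_+]\times[-\al,\al]$. This already yields part (1): on the compact $R$ the nonzero holomorphic function $y$ has finitely many zeros, and, as the estimate below shows, the resulting count is bounded in terms of the exponents and $\al$ alone, uniformly over all solutions.

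For the quantitative statement I would apply the argument principle $N=\tfrac1{2\pi}\,\Delta_{\partial R}\arg y$ and estimate the net increment of $\arg y$ along the four sides of $R$. On the two vertical sides a single exponential term dominates, so there $y=c\,e^{\la z}(1+\eta)$ with $|\eta|<1$, and the increment of $\arg y$ equals that of $\arg e^{\la z}=\Im(\la z)$ up to a bounded error controlled by Lemma~\ref{lm:ValPous}. The right side (governed by $\la_r$) and the left side (governed by $\la_1$) together contribute the main term $\tfrac1{2\pi}\cdot 2\al\,(\Re\la_r-\Re\la_1)$, with an $O(1)$ correction per side; note $\Re\la_r-\Re\la_1\le\mathcal L(EQ)$. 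This supplies one of the two $\al$-linear contributions (the summand $2$ inside $(\Xi+2)$) and part of the additive constant.

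The main work, and the genuine obstacle, is the net increment of $\arg y$ along the two horizontal sides $\Im z=\pm\al$. Here no single term dominates along the whole side, and the \emph{total} variation of $\arg y$ a priori grows with the length $x_+-x_-$, which depends on the arbitrary coefficients; the decisive point is that the \emph{net} winding around the closed contour is coefficient-free. To extract it I would traverse the strip in the direction of increasing $\Re z$ and record the successive exchanges of dominance between consecutive (in real part) roots $\la_j$ and $\la_{j+1}$. Near each such transition $y$ is well approximated by the two-term sum $c_j e^{\la_j z}+c_{j+1}e^{\la_{j+1}z}$, whose zeros in $\Pi_\al$ form an arithmetic progression along a line of slope $(\Re\la_{j+1}-\Re\la_j)/(\Im\la_{j+1}-\Im\la_j)$ with vertical spacing $2\pi(\Re\la_{j+1}-\Re\la_j)/|\la_{j+1}-\la_j|^2$; the number meeting the strip is $\tfrac{\al}{\pi}|\la_{j+1}-\la_j|^2/(\Re\la_{j+1}-\Re\la_j)$ up to $O(1)$. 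Using $|\la_{j+1}-\la_j|^2/(\Re\la_{j+1}-\Re\la_j)\le|\la_{j+1}-\la_j|(1+\Xi)$ and bounding the spacing correction by $\Theta$, the sum over the $k-1$ transitions collapses into the shape $(k-1)^2+\tfrac2\pi(k-1)\mathcal L(EQ)[\al(\Xi+2)+\Theta\log4]$ of \eqref{eq: simple exponents bound}: the factor $k-1$ counts the transitions, and the polygonal length $\mathcal L(EQ)$ packages the consecutive displacements $|\la_{j+1}-\la_j|$.

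The hard part is making the heuristic "near each transition only two terms matter" rigorous and coefficient-uniform: one must control the error from the remaining $k-2$ terms and show that the winding accrued outside the transition zones cancels in the net, so that the coefficient-dependent width $x_+-x_-$ drops out entirely. I would do this by a telescoping peeling argument — dividing $y$ by $e^{\la_1 z}$ and differentiating to kill the lowest term, thereby reducing to a sum of $k-1$ exponentials, and iterating — applying at each step a de la Vall\'ee Poussin--type bound (Lemma~\ref{lm:ValPous}) to pass between the zeros of consecutive functions, with the $\log 4$ and the $(k-1)^2$ accumulating as the per-step corrections. Verifying that this peeling reproduces \emph{exactly} the constants in \eqref{eq: simple exponents bound}, rather than merely a bound of the same form, is the most delicate part of the bookkeeping.
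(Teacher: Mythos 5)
Your guiding idea --- that a single exponential term dominates outside finitely many transition zones whose width and tilt are governed by $\Theta$ and $\Xi$ --- is the same germ as in the paper's argument, but your proposal leaves exactly the load-bearing steps unproven, and the two steps you flag as ``delicate'' are in fact where the proof lives (or fails). The reduction ``near each transition only the two consecutive terms $\la_j,\la_{j+1}$ matter'' is not coefficient-uniform: if some of the coefficients $c_j$ in $y=\sum_j c_je^{\la_jz}$ are tiny or vanish, dominance passes directly between non-consecutive roots, and for suitable coefficients three or more terms remain comparable along long stretches of the strip, so the picture of exactly $k-1$ transitions, each modelled by a two-term sum with zeros in arithmetic progression, has no justification; this is the heart of the matter, not bookkeeping. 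The paper avoids any two-term reduction: for each $u$ it forms the least concave majorant $\phi_u(\mu)$ of the points $\bigl(\Re\la_j,\ln|c_je^{\la_ju}|\bigr)$, shows (Lemma~\ref{lm:extra}) that if all increments of $\phi_u$ between consecutive nodes exceed $\ln 4$ then the top term beats the sum of all the others by a geometric series, hence $u\in G(y,0)$, and concludes (Corollary~\ref{cor:slope}, Lemmas~\ref{lm:imp} and~\ref{lm:impPi}) that the bad set lies in the $\ln4\cdot\Theta$-neighborhood of the at most $k-1$ slopes of $\phi_0$, these slopes moving at vertical rate at most $\Xi$ as $\Im z$ varies. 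This yields at most $k-1$ boxes of total width $2(k-1)(\al\Xi+\Theta\ln4)$, uniformly in the coefficients and regardless of which, or how many, terms compete.

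Second, your net-cancellation claim for the winding along the horizontal sides, and the ``telescoping peeling'' meant to substantiate it, are named but not proved, and over $\bC$ the peeling step has no Rolle theorem to lean on: the complex substitute is precisely the Khovanskii--Yakovenko generalized Rolle theorem, which the paper invokes as a black box (Theorem~\ref{th:KY}) to bound the number of zeros in each box by $k-1+\frac1\pi\mathcal{L}(\Lambda)\operatorname{diam}$; summing over the at most $k-1$ boxes is what produces \eqref{eq: simple exponents bound} with exactly the stated constants, the $(k-1)^2$ coming from the $k-1$ summands and $\al(\Xi+2)$ from the total width plus the box heights $2\al$. Your proposed substitute, Lemma~\ref{lm:ValPous}, cannot play this role on your contour: it bounds the argument variation in terms of the \emph{length} of the arc, and your horizontal sides have the coefficient-dependent length $x_+-x_-$, which is exactly the quantity that must drop out. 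Finally, part (1) of the Proposition allows multiple roots, and there your argument establishes only that each individual solution has finitely many zeros (the rectangle $R$ depends on the solution), not the claimed bound $\sharp(EQ,\al)$ uniform over all solutions; your quantitative sketch is confined to simple roots, whereas the paper needs a separate covering argument for the multiple-root case (Lemma~\ref{lm:mult const}, Step~4), controlling all pairwise ratios of the terms $A_j(z)e^{\la_jz}$ and excising a set $W$ around the zeros of the polynomial factors $A_j$.
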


The case of multiple characteristic roots will be considered in Step 4. 

\medskip
Our approach to the proof of Proposition~\ref{prop:constant} is  inspired by the Wiman-Valiron theory, see \cite{Va}. The main construction below  has a strong resemblance with the notion of a tropical polynomial in the modern tropical geometry.  The proof itself is based on a rather long sequence of lemmas and the last argument is given at the end of Step 5. 

\medskip
The general solution of \eqref{eq:const} is given by:  
\begin{equation}\label{eq:qpl resonant}
y=\sum_j A_j(z)e^{\la_jz},\quad\text{where}\, \deg A_j(z)=n_j, 
\sum (n_j+1)=k.
\end{equation} 
Define the {\it domain of a single term 
$y$-dominance  in $\Pi_\al$} as 
\begin{equation}\label{eq:def G(y, al)}
G(y,\al):=\{z\in \Pi_\al\; |\;\exists j=j(z),\,\exists \eps>0\;:\: | 
A_j(z)e^{\la_jz}|\ge (1-\eps) \sum_{i\neq  j}|A_i(z) e^{\la_iz}|\}.
\end{equation}

Note that  $G(y,\al)$ may contain at most $\;\min n_j\le k$ zeros of $y$, 
namely 
the common zeros of all $A_j(z).$ In particular, $G(y,\al)$ contains no zeros of $y$ at all in the case 
of simple characteristic exponents.

\begin{lm}\label{lm:mult const}
The complement  $\Pi_\alpha\setminus G(y, \al)$  can be 
covered by at most 
$k+k^2+k^3$ horizontal boxes (of height $2\al$) of 
the total width  not exceeding 
$$k^2(k+1)(4\Theta\ln k+4\al\Xi+4\al) + 8k^2\Theta.$$
\end{lm}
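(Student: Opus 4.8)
The plan is to estimate the complement of the dominance domain $G(y,\al)$ directly in terms of the ``tropical'' structure of the exponentials $|A_j(z)e^{\la_j z}|$. The key observation is that on the strip $\Pi_\al$, the modulus $|A_j(z)e^{\la_j z}|$ is governed, up to the slowly-varying polynomial factor $|A_j(z)|$, by the linear function $\Re(\la_j z)=\Re(\la_j)\,\Re(z)-\Im(\la_j)\,\Im(z)$. Writing $x=\Re z$, $y=\Im z$ with $|y|\le\al$, the logarithm $\log|A_j(z)e^{\la_j z}|$ is, to leading order, the affine function $\Re(\la_j)x - \Im(\la_j)y + \log|A_j(z)|$. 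A point fails to lie in $G(y,\al)$ precisely when no single term dominates the sum of the others; comparing two terms $j$ and $i$, the relative dominance switches across the set where $\Re(\la_j)x-\Im(\la_j)y \approx \Re(\la_i)x-\Im(\la_i)y$, i.e. near a line of slope controlled by $\Im\la_j-\Im\la_i$ over $\Re\la_j-\Re\la_i$.

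\medskip
\noindent\textbf{First I would} reduce to understanding, for each \emph{pair} $(i,j)$ of indices, the ``transition region'' $T_{ij}\subset\Pi_\al$ where $|A_i e^{\la_i z}|$ and $|A_j e^{\la_j z}|$ are comparable in the sense needed to spoil dominance. Because the real parts $\Re\la_1<\dots<\Re\la_k$ are distinct, the comparison of the two dominant exponential growth rates in any given vertical line $x=\text{const}$ is decided by $x$ alone up to a bounded error coming from the heights $|y|\le\al$ (contributing a horizontal shift of size at most $\al|\Im\la_j-\Im\la_i|/|\Re\la_j-\Re\la_i|\le\al\Xi$ per comparison) and from the polynomial amplitudes $|A_j|$ (contributing a shift controlled by $\Theta$ times the logarithm of the degree, yielding the $\Theta\ln k$ terms). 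The plan is to bound the horizontal width of each $T_{ij}$ by a quantity of the form $C(\Theta\ln k+\al\Xi+\al)$, and then to note that $\Pi_\al\setminus G(y,\al)$ is contained in the union of these pairwise transition regions. Summing over the $O(k^2)$ pairs and accounting for the at most $O(k^3)$ connected horizontal boxes that arise when the polynomial factors create additional local sign changes in the comparison inequalities gives the stated box count $k+k^2+k^3$ and the total-width bound.

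\medskip
\noindent\textbf{The hard part will be} making the heuristic ``comparable moduli occur in a narrow vertical band'' quantitatively precise when the amplitudes $A_j(z)$ are genuine polynomials rather than constants. Within a single term's zone the inequality defining dominance is not linear in $x$ but has the form $\Re(\la_j)x + \log|A_j| \ge \Re(\la_i)x + \log|A_i| + O(1)$, and the logarithmic corrections $\log|A_j(z)|$ can oscillate and even blow up near the finitely many zeros of $A_j$. I would handle this by isolating small horizontal boxes around the real parts of the zeros of each $A_j$ (there are at most $\sum n_j\le k$ such zeros, contributing the additive $8k^2\Theta$ slack and part of the box count), and away from those zeros bounding $|\log|A_j(z)||$ crudely by $n_j\log$ of the distance, converting it into the $\Theta\ln k$ horizontal displacement via the spectral gap $|\Re\la_j-\Re\la_i|^{-1}\le\Theta$. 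Once the width of every transition band and every exceptional box is controlled, the final tally is a matter of bookkeeping: counting at most $k$ zero-boxes, at most $k^2$ pairwise transition boxes, and at most $k^3$ boxes from the refinement needed to separate all three-term comparisons, and adding their widths. I expect the comparison/refinement combinatorics, rather than any single estimate, to be the genuine obstacle, since one must ensure the horizontal boxes genuinely cover $\Pi_\al\setminus G(y,\al)$ and not merely its pairwise-comparison skeleton.
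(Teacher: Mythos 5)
Your overall architecture coincides with the paper's: reduce to pairwise comparisons $r_{jj'}=|A_je^{\la_jz}/A_{j'}e^{\la_{j'}z}|$ (the complement $\Pi_\al\setminus G(y,\al)$ lies in the union of the sets $\{|\ln r_{jj'}|\le\ln k\}$, by the maximal-term argument you sketch), excise horizontal boxes around the real parts of the roots of the $A_j$ (the paper's set $W$, accounting for the additive $8k^2\Theta$, which you identified correctly), and then do bookkeeping over the at most $k^2$ pairs. The genuine gap is in your treatment of the polynomial amplitudes away from their roots. You propose to bound the \emph{value} $|\log|A_j(z)||$ by $n_j\log(\text{distance})$ and to ``convert it into the $\Theta\ln k$ horizontal displacement via the spectral gap''. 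This step fails: $\log|A_j/A_{j'}|$ is unbounded on $\Pi_\al\setminus W$ (it grows like $(n_j-n_{j'})\log|z|$ at infinity whenever $\deg A_j\neq \deg A_{j'}$), so dividing a value bound by $|\theta_{jj'}|=|\Re(\la_j-\la_{j'})|$ yields an unbounded displacement, not $O(\Theta\ln k)$; moreover, a value bound alone gives no control on the \emph{number} of connected components of each transition set, which your box count requires.

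What the paper does at precisely this point --- and what your plan is missing --- is to bound the \emph{derivative} rather than the value: outside $W$ (i.e., at horizontal distance at least $4k\Theta$ from the real part of every root), each root contributes at most $1/(4k\Theta)$ to $|\partial_u\ln|A_j/A_{j'}||$, and since the total number of roots is at most $k$, one gets $|\partial_u\ln|A_j/A_{j'}||\le 1/(4\Theta)\le|\theta_{jj'}|/2$. Hence $|\partial_u\ln r_{jj'}|\ge|\theta_{jj'}|/2$ outside $W$, so $\ln r_{jj'}$ is strictly monotone in $u$ on each of the at most $k+1$ components of $\R\setminus W$. This single estimate simultaneously yields the width bound (an interval of length at most twice the threshold $\ln k+\al|\xi_{jj'}\theta_{jj'}|+\al|\theta_{jj'}|$ divided by the slope $|\theta_{jj'}|/2$, i.e. at most $4\Theta\ln k+4\al\Xi+4\al$) and the component count: one interval per pair per component, giving $k^2(k+1)$ boxes plus $k$ boxes for $W$, i.e. $k+k^2+k^3$. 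Note also that your final tally attributes the $k^3$ boxes to ``three-term comparisons''; there are none in the argument --- the cubic term is simply (number of pairs) times (number of components of $\R\setminus W$), consistent with your earlier, correct remark that the polynomial factors fragment the pairwise regions.
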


We first consider the case of  simple characteristic exponents $\la_j$.  
This case is  more transparent and the resulting estimates seem to be 
of correct order of magnitude. In this case the polynomials $A_j(z)$  are 
constants and will be denoted by $a_j$.

\begin{lm}\label{lm:impPi} In the case of simple characteristic exponents 
$\la_j$, the complement $\Pi_\al\setminus G(y,\al)$  can be 
covered by at most 
$k-1$ horizontal boxes (of height $2\al$) of 
the total width  not exceeding  
\begin{equation}
2\al(k-1)\,\Xi+2(k-1)\Theta\ln 4.
\end{equation}

\end{lm}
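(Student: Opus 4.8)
The plan is to pass to the logarithmic scale, where the problem becomes piecewise-linear (``tropical''), and then to control separately the horizontal extent of each transition and its drift as the height $y=\Im z$ varies. Writing $z=x+iy$ and abbreviating $\mu_j=\Re\la_j$, $\nu_j=\Im\la_j$, the modulus of the $j$-th term is $|a_j e^{\la_j z}|=e^{\psi_j(z)}$ with
$$\psi_j(z)=\log|a_j|+\mu_j x-\nu_j y,$$
an affine function of $(x,y)$ whose $x$-slope is $\mu_j$. Since the characteristic exponents are simple these are the only terms, and a point lies in $G(y,\al)$ precisely when one $e^{\psi_j}$ dominates the sum of the remaining ones; in particular there the maximal term strictly exceeds the sum of the others, so $\Pi_\al\setminus G(y,\al)$ is contained in a neighborhood of the ``tropical curve'' $\{z:\ \max_j\psi_j(z)\text{ is attained at least twice}\}$.

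First I would fix the height $y\in[-\al,\al]$ and restrict to the horizontal line. There the functions $x\mapsto\psi_j(x,y)$ are affine with pairwise distinct slopes $\mu_1<\dots<\mu_k$, so their upper envelope $\Psi(x)=\max_j\psi_j(x,y)$ is convex and piecewise linear, its dominant index running through an increasing subsequence of $1,\dots,k$ as $x$ grows. Hence $\Psi$ has at most $k-1$ breakpoints, and in the interior of each linear piece the maximal term is strictly larger than all the others, the gap growing linearly in the distance to the nearest breakpoint. Consequently the maximal term beats the whole sum away from the breakpoints, and $\big(\Pi_\al\setminus G(y,\al)\big)\cap\{\Im z=y\}$ is covered by at most $k-1$ intervals, one about each breakpoint.

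Next I would estimate the length of the interval about a breakpoint, where two co-maximal (envelope-adjacent) terms $p,q$ with $\mu_p<\mu_q$ have equal modulus. Along the line the ratio of these two terms is $e^{\psi_q-\psi_p}=e^{(\mu_q-\mu_p)(x-x^\ast(y))}$, where $x^\ast(y)$ is the breakpoint; it leaves the band $[\tfrac14,4]$ as soon as $|x-x^\ast(y)|$ exceeds $\tfrac{\ln 4}{\mu_q-\mu_p}$. Since $\mu_q-\mu_p$ is a sum of consecutive real-part gaps, $\mu_q-\mu_p\ge\min_l(\mu_{l+1}-\mu_l)=\Theta^{-1}$, so the intrinsic $x$-width of the transition is at most $\tfrac{2\ln 4}{\mu_q-\mu_p}\le 2\Theta\ln 4$. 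Finally, to assemble the $k-1$ boxes of height $2\al$, I would track how the breakpoints drift with $y$: each moves along a line $|a_pe^{\la_pz}|=|a_qe^{\la_qz}|$ of inclination $\big|\tfrac{dx}{dy}\big|=\big|\tfrac{\nu_q-\nu_p}{\mu_q-\mu_p}\big|$, and by the mediant inequality this chord slope is at most $\Xi$ even for non-consecutive $p,q$, so over $y\in[-\al,\al]$ each breakpoint wanders within an $x$-interval of length $\le 2\al\Xi$. Adding the two contributions yields $k-1$ boxes of height $2\al$, each of $x$-width at most $2\al\Xi+2\Theta\ln 4$, whence the total width $2\al(k-1)\Xi+2(k-1)\Theta\ln 4$.

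The main obstacle is the control of the transition width by the $k$-independent constant $\ln 4$. Membership in $G(y,\al)$ is governed by the full sum of the $k-1$ subdominant terms, not merely by the second-largest one, so to reduce the analysis to a pairwise comparison of the two co-maximal terms one must show that near an envelope breakpoint the remaining terms contribute only a bounded multiple of the second-largest term. This is exactly where the strict separation of the real parts $\mu_j$ enters: adjacency on the convex envelope forces every other term to be geometrically smaller, with decay governed by the slope gaps, so the subdominant tail is summable and is absorbed into the constant. A secondary technical point is the behaviour of the envelope combinatorics as $y$ varies — linear pieces may appear or disappear and breakpoints may merge — and one must check that the drifting-breakpoint bookkeeping still produces only $k-1$ boxes; it does, since the number of breakpoints never exceeds $k-1$ and their positions vary continuously with $x$-velocity bounded by $\Xi$.
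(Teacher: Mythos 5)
Your proof is correct and is essentially the paper's own argument in Legendre-dual form: where the paper tracks the slopes of the least concave majorant of the points $(\Re\la_j,\ln|a_je^{\la_jz}|)$ and shows they drift with speed at most $\Xi$ (its Lemma 8, Corollary 9), you equivalently track the breakpoints of the upper envelope of the affine functions $x\mapsto\ln|a_je^{\la_j(x+iy)}|$, with the same three ingredients — at most $k-1$ transition loci, half-width $\Theta\ln 4$ per breakpoint via the geometric-series domination (ratio $1/4$, tail $2/3<1$), and drift speed $\le\Xi$ via the mediant inequality. The constants, the counting, and the final bound coincide with the paper's, so this is the same proof in a different (arguably more direct) coordinate picture.
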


The principal case in Lemma~\ref{lm:impPi}  Êis  $\al=0$, i.e.  $\Pi_0=\R$.

\begin{lm}\label{lm:imp} In the above notations,  $\bR\setminus G(y,0)$ is contained in the union of at most $k-1$ closed intervals of the total length less 
than 
or equal to   
$2(k-1)\ln 4\cdot  \Theta.$
\end{lm}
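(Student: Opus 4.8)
The plan is to work entirely on the real line, where $\al=0$ turns the statement into one about real exponential sums. For real $z=x$ one has $|a_je^{\la_jx}|=|a_j|e^{\mu_jx}$ with $\mu_j:=\Re\la_j$, and since the exponents are simple the amplitudes $A_j\equiv a_j$ are constants; reorder so that $\mu_1<\mu_2<\dots<\mu_k$. Writing $M_j(x):=|a_j|e^{\mu_jx}$, the set $G(y,0)$ is the set of $x$ at which one term dominates, $M_j(x)\ge\sum_{i\ne j}M_i(x)$ for some $j$, so $\bR\setminus G(y,0)$ is contained in the closed set where no single term dominates the sum of the others. The whole analysis then becomes a statement about the $k$ affine functions $g_j(x):=\log M_j(x)=\log|a_j|+\mu_jx$, whose graphs are lines with pairwise distinct slopes $\mu_j$.

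The two convex functions I would build from these lines are the tropical (max-plus) envelope $T(x):=\max_j g_j(x)$ and the log-sum $\Phi(x):=\log\sum_j M_j(x)$. Both are convex: $T$ is piecewise linear with at most $k-1$ breakpoints, its slope stepping up through a subsequence of the $\mu_j$ as $x$ increases, while $\Phi$ is smooth with $\Phi'$ increasing from $\mu_1$ to $\mu_k$. Setting $\Psi:=\Phi-T\in[0,\log k]$, a one-line computation gives that single-term dominance $M_j\ge\sum_{i\ne j}M_i$ holds whenever $\Psi\le\log 2$ (since $2M_{j^*}\ge\sum_i M_i\iff\Psi\le\log2$ for the maximal term $j^*$); hence $\bR\setminus G(y,0)\subseteq\{\Psi\ge\log 2\}$. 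On each maximal interval of linearity of $T$ the function $\Psi$ is convex (a convex function minus an affine one), so there its superlevel set $\{\Psi\ge\log 2\}$ is the complement of an interval and therefore concentrates near the two endpoints. Since $\Psi\to0$ at $\pm\infty$, the unbounded pieces contribute nothing away from their finite endpoints, and the pieces abutting each breakpoint merge into a single interval straddling it. This already yields the first assertion: $\bR\setminus G(y,0)$ is covered by at most $k-1$ intervals, one around each breakpoint of $T$.

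It remains to bound the length of the interval $I_*$ surrounding a breakpoint $x_*$ at which the two active terms have real parts $\mu_p<\mu_q$. Writing $u=x-x_*$, at $x_*$ the two top terms coincide while every other term is strictly smaller, and moving to either side the ratio of the two leading terms decays like $e^{-(\mu_q-\mu_p)|u|}$. The task is to show that $\Psi$ drops below $\log 2$ once $|u|$ exceeds $\ln 4/(\mu_q-\mu_p)$; equivalently, one must prove $\sum_i e^{g_i(x)-T(x)}<2$ beyond that distance, i.e. that the leading term together with the \emph{entire} tail of subdominant terms is controlled. Using $\mu_q-\mu_p\ge\min_j(\mu_{j+1}-\mu_j)=\Theta^{-1}$ then gives each side of $I_*$ length at most $\ln4\cdot\Theta$, hence $|I_*|\le 2\ln4\cdot\Theta$; summing over the at most $k-1$ breakpoints yields the stated total length $2(k-1)\ln4\cdot\Theta$.

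The main obstacle is precisely this last length estimate, because it must not degrade with $k$: the per-interval bound $2\ln4\cdot\Theta$ carries no factor of $k$, so crudely bounding the sum of subdominant terms by $(k-1)$ times the nearest competitor is useless. What rescues the estimate is the tropical convexity: the defects $T(x)-g_i(x)$ of the non-leading lines grow at least linearly in their rank, so the subdominant magnitudes form a geometrically decaying sequence whose sum stays within a fixed multiple of the leading competitor, uniformly in $k$. Carrying out this geometric summation is what produces the clean constant $\ln 4$ (equivalently the threshold ratio $e^{-\ln4}=\tfrac14$), and I expect this tail bound to be the only genuinely delicate point; the interval-counting and the reduction to real exponentials are routine once $T$, $\Phi$, and $\Psi$ are in place.
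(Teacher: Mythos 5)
Your setup is sound and is precisely the Legendre--tropical dual of the paper's: the paper works in the $(\mu,\phi)$-plane with the least concave majorant $\phi_u$ of the points $(\Re\la_j,\ln|a_je^{\la_ju}|)$, and the breakpoints of your envelope $T$ are exactly the negatives of the slopes of $\phi_0$; your dominance criterion $\Psi\le\log 2$ and the component count via convexity of $\Psi$ on the linearity intervals of $T$ are both correct. The gap is in the length estimate, which you simultaneously defer (you say you expect the tail bound to be the delicate point, without carrying it out) and formulate incorrectly: it is \emph{false} that the component $I_*$ of $\{\Psi\ge\log 2\}$ containing a breakpoint $x_*$ lies within distance $\ln 4/(\mu_q-\mu_p)$ of $x_*$. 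Take $g_j(x)=jx-\delta j(j-1)/2$ for $j=0,\dots,k-1$ and $\delta=\ln 4$: the breakpoints are $x=0,\delta,\dots,(k-2)\delta$, all slope gaps equal $1$ (so $\Theta=1$), and on each linearity interval $[m\delta,(m+1)\delta]$ with $0\le m\le k-3$ the two adjacent non-leading terms alone already give $e^{-t}+e^{-(\delta-t)}\ge 2e^{-\delta/2}=1$, where $t=x-m\delta$. Hence $\{\Psi\ge\log 2\}$ contains the whole interval $[0,(k-2)\ln 4]$: a single component of length at least $(k-2)\ln 4$ swallowing all the breakpoints, so no per-component bound $|I_*|\le 2\ln 4\cdot\Theta$ can hold (the \emph{total} length bound of the lemma is of course still satisfied). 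The reason your mechanism cannot be localized is that the geometric decay of the defects $T-g_i$ in the rank requires every consecutive drop of the concave majorant to be at least $\ln 4$, i.e., control of \emph{all} slopes/breakpoints; distance from the single breakpoint $x_*$ controls only the one chord through $\mu_p,\mu_q$.

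The correct formulation, which is what the paper proves in Lemma~\ref{lm:extra} and Corollary~\ref{cor:slope}, is global: if $x$ lies at distance greater than $\ln 4\cdot\Theta$ from every breakpoint of $T$ (equivalently, $-x$ avoids the $\ln 4\cdot\Theta$-neighborhood of every slope of $\phi_0$), then every linear piece of the shifted majorant $\phi_x$ has slope of absolute value exceeding $\ln 4\cdot\Theta$, so the majorant decreases by at least $\ln 4$ between consecutive $\Re\la_j$'s; since each point lies on or below the majorant, the non-leading terms are bounded by $4^{-r}$ in terms of their rank $r$ from the central index, and they sum to at most $2(4^{-1}+4^{-2}+\cdots)=2/3<1$, proving dominance. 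The bad set is then covered by the union of at most $k-1$ intervals of radius $\ln 4\cdot\Theta$ centered at the breakpoints, which yields the count and the total length $2(k-1)\ln 4\cdot\Theta$ simultaneously, with no claim about individual components needed. If you replace your per-breakpoint step by this global one, your argument becomes a faithful dual translation of the paper's proof.
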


 To prove Lemma~\ref{lm:imp}, we need an additional statement.  In $\bR^2$ with 
coordinates $(\mu,\phi)$ consider the $1$-parameter family  $\{Pt_j(u)\}_{j=1}^k$  of $k$ points  given by 
$$\mu=\Re (\la_j), \quad \phi=\ln |a_j e^{\la_ju}|,$$ where $u$ is a real-valued parameter. 
 For a given value of $u\in \bR,$ introduce the piecewise-linear function $\phi_u(\mu)$ as the {\it least concave majorant} of  $\{Pt_j(u)\}_{j=1}^k.$  By this we mean the minimal   concave function  $\phi_u(\mu)$ defined in the interval $[\Re(\la_1),\Re(\la_k)]$ such that all points $\{Pt_j(u)\}_{j=1}^k$ lie non-strictly below its graph, i.e. have their $\phi$-coordinate smaller than or equal to that of  $\phi_u(\mu)$.  (One can easily see that the graph of $\phi_u(\mu)$ is the upper part of the boundary of the convex hull of $\{Pt_j(u)\}_{j=1}^k$ connecting $Pt_1(u)$ and $Pt_k(u)$.)  Observe that, for any $u\in \bR$,  
\begin{equation}\label{eq:rel} \phi_u(\mu)=\phi_0(\mu)+u\mu.
\end{equation}

\begin{lm}\label{lm:extra} If, for $j=1,\dots , k-1,$
$$|\phi_u(\Re(\la_{j+1}))-\phi_u(\Re(\la_j))|\ge \ln 4,$$  
then $u\in 
G(y,0)$.  
\end{lm}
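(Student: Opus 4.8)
The plan is to read the hypothesis as a statement that, at the parameter value $u$, the least concave majorant $\phi_u$ decays geometrically (with ratio at least $4$) as one moves away from the index where the corresponding exponential term is largest; this forces that single term to dominate the sum of all the others, which is exactly the condition $u\in G(y,0)$. Throughout I would write $\mu_j:=\Re(\la_j)$ (so $\mu_1<\dots<\mu_k$) and $\psi_j:=\phi_u(\mu_j)$. Recall that on $\R$ the amplitude of the $j$-th term is $|a_je^{\la_ju}|=e^{\ln|a_j|+\mu_ju}$, and that, by the very definition of the majorant as an upper bound for the points $Pt_j(u)$, its exponent satisfies $\ln|a_j|+\mu_ju\le \psi_j$ for every $j$.

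First I would locate the peak. Let $j^\ast$ be an index realizing $M:=\max_j(\ln|a_j|+\mu_ju)$. Since the graph of $\phi_u$ is the upper part of the boundary of the convex hull of the points $Pt_j(u)$, its maximum over $[\mu_1,\mu_k]$ equals $M$ and is attained at $\mu_{j^\ast}$, so $\psi_{j^\ast}=M$. Because $\phi_u$ is concave and peaks at $\mu_{j^\ast}$, the finite sequence $(\psi_j)$ is unimodal: nondecreasing for $j\le j^\ast$ and nonincreasing for $j\ge j^\ast$.

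The key step combines this sign information with the hypothesis. For consecutive indices on the left of the peak, $|\psi_{j+1}-\psi_j|\ge\ln 4$ together with $\psi_{j+1}\ge\psi_j$ gives $\psi_{j+1}-\psi_j\ge\ln 4$; symmetrically, to the right of the peak each step drops by at least $\ln 4$. Telescoping in either direction yields $\psi_j\le M-|j-j^\ast|\ln 4$, and hence, using $\ln|a_j|+\mu_ju\le\psi_j$,
\[
|a_je^{\la_ju}|\le e^{M}\,4^{-|j-j^\ast|},\qquad j=1,\dots,k.
\]
Since at most two indices lie at each distance $m\ge 1$ from $j^\ast$,
\[
\sum_{i\ne j^\ast}|a_ie^{\la_iu}|\le e^{M}\cdot 2\sum_{m\ge 1}4^{-m}=\tfrac{2}{3}\,e^{M}<e^{M}=|a_{j^\ast}e^{\la_{j^\ast}u}|,
\]
so the $j^\ast$-th term strictly dominates the sum of the rest and therefore $u\in G(y,0)$. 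The choice of the constant $\ln 4$ is exactly what makes the two-sided geometric series sum to $\tfrac23<1$; any ratio producing a sum below $1$ would suffice, but $4$ is the clean threshold.

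The only genuinely delicate point is the passage from the absolute-value hypothesis to the one-sided decay estimate: one must invoke the concavity/unimodality of $\phi_u$ to fix the sign of each increment $\psi_{j+1}-\psi_j$ before absorbing the $\ln 4$. I expect this to be the main obstacle, though it is short; the remaining steps are a convex-hull observation and a geometric-series estimate. I would also note in passing that the hypothesis automatically forces the peak to be strict (equal consecutive values would violate $|\psi_{j+1}-\psi_j|\ge\ln 4$), so no degenerate flat-top case can arise.
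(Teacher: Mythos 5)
Your proof is correct and follows essentially the same route as the paper's: your index $j^\ast$ is exactly the paper's ``central index'' $i(u)$, and both arguments derive the bound $|a_je^{\la_ju}|\le \exp(\phi_u(\Re\la_j))\le 4^{-|j-i(u)|}\,|a_{i(u)}e^{\la_{i(u)}u}|$ and finish by summing the geometric series. The only difference is one of detail: you spell out the unimodality/telescoping step (and the coincidence of the majorant's peak with the maximal point value) that the paper compresses into a single displayed inequality.
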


\begin{proof}[Proof of Lemma~\ref{lm:extra}] Define  the central index  of $\phi_u(\mu)$ by the formula: 
 $$i(u):=\{i\;| \Re(\la_i) \text{ is the point of the global maximum for } \phi_u(\mu)\},$$ 
comp. Ch. 1, \cite{Va}.  
Then for any $j\neq i,$ 
$$
|a_je^{\la_ju}|\le 
\exp(\phi_u(\Re\la_j))\le 4^{-|j-i(u)|}|a_{i(u)}e^{\la_{i(u)}u}|.$$ 
Therefore the 
inequality in the definition \eqref{eq:def G(y, al)} of $G(y,0)$ follows after the  
summation of a  geometric series. 
\end{proof}

\begin{cor}\label{cor:slope}  If $-u$ lies outside the $\ln 4 \cdot \Theta$-neighborhood of the 
set of all slopes of $\phi_0(\mu),$ 
then $u\in G(y,0)$.
\end{cor}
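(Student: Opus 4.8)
The plan is to read the corollary off Lemma~\ref{lm:extra}, using only the translation identity \eqref{eq:rel} and the combinatorics of the concave majorant. Write $\mu_j:=\Re(\la_j)$, so that $\mu_1<\mu_2<\dots<\mu_k$. First I would use \eqref{eq:rel} to express the increment of $\phi_u$ across a consecutive interval $[\mu_j,\mu_{j+1}]$: since $\phi_u(\mu)=\phi_0(\mu)+u\mu$,
\[
\phi_u(\mu_{j+1})-\phi_u(\mu_j)=\bigl(\phi_0(\mu_{j+1})-\phi_0(\mu_j)\bigr)+u\,(\mu_{j+1}-\mu_j).
\]
The goal is then to show that the hypothesis on $-u$ forces each such increment to have absolute value at least $\ln 4$, whereupon Lemma~\ref{lm:extra} immediately yields $u\in G(y,0)$.

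The crucial step I would isolate is that $\phi_0$ is \emph{linear} on every interval $[\mu_j,\mu_{j+1}]$. This holds because the graph of $\phi_0$ is the upper boundary of the convex hull of $\{Pt_j(0)\}$, so all of its breakpoints sit at abscissae belonging to $\{\mu_1,\dots,\mu_k\}$; as $\mu_j$ and $\mu_{j+1}$ are consecutive, no breakpoint can lie strictly between them. Consequently the average slope
\[
\sigma_j:=\frac{\phi_0(\mu_{j+1})-\phi_0(\mu_j)}{\mu_{j+1}-\mu_j}
\]
is precisely the slope of one linear piece of $\phi_0$, hence one of the slopes appearing in the statement.

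It remains to combine the estimates. The last display rewrites the increment as $(\sigma_j+u)(\mu_{j+1}-\mu_j)$, so
\[
\bigl|\phi_u(\mu_{j+1})-\phi_u(\mu_j)\bigr|=\bigl|(-u)-\sigma_j\bigr|\,(\mu_{j+1}-\mu_j).
\]
Since $-u$ lies outside the $\ln4\cdot\Theta$-neighborhood of the set of slopes and $\sigma_j$ is one such slope, the first factor is at least $\ln4\cdot\Theta$; and from $\Theta=\max_j|\mu_j-\mu_{j+1}|^{-1}$ the second factor is at least $\Theta^{-1}$. Their product is at least $\ln4$ for every $j=1,\dots,k-1$, which is exactly the hypothesis of Lemma~\ref{lm:extra}.

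I do not anticipate a real obstacle: once \eqref{eq:rel} is available the argument is elementary. The one point demanding care is the linearity claim — that consecutiveness of $\mu_j,\mu_{j+1}$ rules out an intermediate vertex of the convex hull. Without it one could not pass from ``$-u$ is far from each individual slope'' to ``$-u$ is far from $\sigma_j$'', because $-u$ might be close to a convex combination of two distinct slopes while being far from each; it is precisely the consecutiveness that prevents this and keeps each $\sigma_j$ an honest slope of $\phi_0$.
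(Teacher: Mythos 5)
Your proof is correct and follows essentially the same route as the paper: via \eqref{eq:rel} the slopes of $\phi_u$ are those of $\phi_0$ shifted by $u$, so the hypothesis forces each increment $|\phi_u(\Re\la_{j+1})-\phi_u(\Re\la_j)|\ge \ln 4\cdot\Theta\cdot\Theta^{-1}=\ln 4$, and Lemma~\ref{lm:extra} concludes. The only difference is that you spell out the step the paper leaves implicit, namely that $\phi_0$ is linear between consecutive abscissae $\Re\la_j,\Re\la_{j+1}$, so that the average slope $\sigma_j$ is an honest slope of $\phi_0$ and the gap is at least $\Theta^{-1}$.
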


\begin{proof}  Formula~\eqref{eq:rel} implies that each slope of $\phi_u(\mu)$ 
equals the sum of the  respective slope of $\phi_0(\mu)$ and $u$. Therefore in 
the considered case, the absolute values of all  slopes of $\phi_u(\mu)$ exceed  
$\ln 4 \cdot \Theta, $ and 
the statement   follows immediately from Lemma~\ref{lm:extra}. \end{proof}

\begin{proof}[Proof of Lemma~\ref{lm:imp}] The $\ln 4 \cdot \Theta$-neighborhood of the set of slopes of 
$\phi_0(\mu)$  consists of the union of at most $k-1$ intervals of total 
length not exceeding $2(k-1)\ln 4 \cdot \Theta$. 
\end{proof}

\begin{proof}[Proof of Lemma~\ref{lm:impPi}]
Consider the general case of Lemma~\ref{lm:impPi} with $\al\ge 0$.

We repeat the above construction of Lemma~\ref{lm:imp} for $z$ running  along the horizontal line 
$\Im z= v$ with $|v|\le \al$.  
For every fixed $v$,     consider in $\bR^2$ with coordinates $(\mu,\phi)$, the $1$-parameter family  $\{Pt_j^v(u)\}_{j=1}^k$  of $k$ points 
  given by 
$$\mu=\{\Re (\la_j), \phi=\ln |a_j e^{\la_j(u+Iv)}|\}$$ where $u$ is a real parameter. 
 Introduce $\phi_u^v(\mu)$ as the {\it least concave majorant} of  $\{Pt^v_j(u)\}_{j=1}^k,$ for a given value of $u\in \bR$. Observe that, for any $u\in \bR$,  
\begin{equation}\label{eq:rel2} \phi^v_u(\mu)=\phi_0^v(\mu)+u\mu.\end{equation}

Now consider the set $Sl_\al:=\cup_{-\al\le v\le \al} \{k_j(v)\}$, where 
$k_j(v)$ are the slopes of $ 
\phi^v_0(\mu)$. We claim that  $Sl_\al$ is the union of at most $k-1$ closed 
intervals. Indeed, the set of slopes $\{k_j(v)\}$ changes
continuously with $v$, and consists of no more than $k-1$ points for each fixed $v$.

Moreover, as $\ln |a_j e^{I\la_jv}|=\ln|a_j|-v\Im\la_j$, the points 
$\{Pt_j^v(0)\}_{j=1}^k$ defining $\phi_0^v(\mu)$ depend linearly on $v$, 
namely they move up or down as $v$ changes.  The inequality 
$$  \left|\frac{\partial k_j(v)}{\partial v}\right|\le \Xi$$ 
is straightforward. 
Therefore the total length of $Sl_\al$ is at most 
$2\al\Xi(k-1)$.

By Corollary~\ref{cor:slope},  if $-u$ lies outside the $\ln 4 
\cdot \Theta$-neighborhood of 
$Sl_\al,$ then, for any $|v|\le \al$, $u+Iv$ lies in $G(y,\al)$  which settles   Lemma~\ref{lm:impPi}.
\end{proof}

\medskip\noindent

\medskip\noindent
Step $4$. Case of multiple characteristic exponents. 

In this case the dependence on $v$ of (analogs of) points  
$Pt_j^v(u)$ seems to be 
more complicated, and we are forced to consider the slopes of 
\emph{all} 
chords connecting these points, and not only those which lie on the boundary of their 
convex hull. This apparently 
leads to an excessive upper bound of the total width of $\Pi_\al\setminus G(y,\al)$.

\begin{proof}[Proof of Lemma~\ref{lm:mult const}]

Consider the absolute value $r_{jj'}$ of the ratio of any two terms in (\ref{eq:qpl resonant}). 
  The complement  $\Pi_\al\setminus G(y,\al)$ lies in 
the  union $\Sigma$ of the sets $\Sigma^o_{jj'}=\{|\ln 
r_{jj'}(z)|\le\ln k\}$, where $r_{jj'}$ is the absolute value of the ratio of 
two 
terms in (\ref{eq:qpl resonant}). 

\medskip
 We can write
\begin{equation}\label{eq:rjj}
\ln r_{jj'}= \ln |A_j/A_{j'}| -v\xi_{jj'}\theta_{jj'}+\theta_{jj'}u, 
\end{equation}
where 
\begin{equation*}
\theta_{jj'}=\Re(\la_j-\la_{j'}),\;\xi_{jj'}=\theta_{jj'}^{-1}\Im(\la_j-\la_{j'}).
\end{equation*} 

Set $W=\{|\Re(z-z_i)|\ge 4k\Theta\},$ where $z_i$ runs 
over all roots of all $ A_j$. Outside $W$ we have 
$$
\left|\frac{\partial}{\partial u}\ln |A_j/A_{j'}|\right|, 
\left|\frac{\partial}{\partial v}\ln |A_j/A_{j'}|\right| \le 
\frac{|\theta_{jj'}|}{2}.
$$

Additionally,   
$$
\Sigma^o_{jj'}\subset \Sigma_{jj'}=\{u+Iv\in\Pi_\al\;:\;
|\ln 
r_{jj'}(u)|\le\ln k+\al|\xi_{jj'}\theta_{jj'}|+\al|\theta_{jj'}|\},
$$
outside $W.$  
Note that 
$\Sigma_{jj'}$ is the union of boxes, since its definition is  independent of $v$. 

Therefore,  
\begin{equation}\label{eq:rjj'}
\left|\frac{\partial 
\ln r_{jj'}}{\partial u}\right|\ge \frac{|\theta_{jj'}|}2,
\end{equation}
outside $W$.

Thus $\Sigma_{jj'}$ intersects each connected component of 
$\R\setminus W$  in   an  interval of length at most $4|\theta_{jj'}|^{-1}\ln 
k+4\al|\xi_{jj'}| +4\al $. In other words,  
$\Sigma_{jj'}\setminus W$ is the union of  at most $k+1$ boxes of 
total 
width not exceeding $(k+1)(4\ln k|\theta_{jj'}|^{-1}+4\al|\xi_{jj'}|+4\al)$.

Taking the union over all possible pairs $(j, j')$, we conclude  that 
$\Sigma\setminus W$ lies in the  union of  at 
most $k^2(k+1)$ boxes of total width at most 
$k^2(k+1)(4\Theta\ln k+4\al\Xi+4\al) $. As $W\cap \Pi_\al$ is the union of at 
most $k$ boxes
of width at most $8k\Theta$ each, we obtain  that $\Sigma$ lies in the  
union of 
at most $k+k^2+k^3$ boxes of total width at most
$k^2(k+1)(4\Theta\ln k+4\al\Xi+4\al) + 8k^2\Theta$.
\end{proof}

Finally let us explain  how  Lemmas~\ref{lm:mult const} and \ref{lm:impPi}  imply  Proposition~\ref{prop:constant}. Consider the space  $QP_\Lambda=\{\sum_j A_j(z)e^{\la_j z}, 
A_j\in\bC[z]\}$ of dimension $k=\sum (1+\deg A_j)$ consisting of exponential polynomials, where 
$\Lambda=\{\la_j\}\subset \bC$ is some finite 
set.   The following 
result was proven in \cite{KY}. 
\begin{tm}[\cite{KY}]\label{th:KY}

The number of zeros of any function $f\in QP_\Lambda$ in a bounded convex domain $U$ does not exceed 
\begin{equation}
k-1+\frac 1 \pi \mathcal{L}(\Lambda)\operatorname{diam}(U),
\end{equation}
where $\mathcal{L}(\Lambda)$ is the length of a shortest polygonal path passing through all points of $\Lambda$.
\end{tm}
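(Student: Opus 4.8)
The plan is to prove the bound by the argument principle, localizing the winding of $f$ along $\partial U$ at the vertices of the Newton polygon $\operatorname{conv}(\Lambda)$. After shrinking $U$ slightly if necessary so that $f$ has no zeros on the boundary, the number of zeros in $U$ equals the winding number $N=\frac{1}{2\pi}\Im\oint_{\partial U}\frac{f'}{f}\,dz$, so everything reduces to estimating this integral. The guiding heuristic is the two-term case $f=a_1e^{\la_1 z}+a_2 e^{\la_2 z}$: here the zeros solve $e^{(\la_1-\la_2)z}=-a_2/a_1$, so they are equally spaced along a line with spacing $2\pi/|\la_1-\la_2|$, and a convex set of diameter $D$ meets at most $1+\frac{|\la_1-\la_2|}{2\pi}D$ of them. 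This already exhibits both terms of the bound and signals that the exponential contribution must come from the turning of the dominant frequency along the boundary.

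For the general case I would decompose $\partial U$ into finitely many arcs $I_1,\dots,I_p$ (in cyclic order, with breakpoints $w_0,\dots,w_p=w_0$) according to which term $|A_j(z)e^{\la_j z}|$ is largest, i.e. the tropical/Wiman--Valiron subdivision already used in this paper for strips. On the arc $I_\iota$ with dominant index $j_\iota$ one writes $\frac{f'}{f}=\la_{j_\iota}+\frac{A_{j_\iota}'}{A_{j_\iota}}+\frac{R_\iota'}{1+R_\iota}$, where $R_\iota=\sum_{i\neq j_\iota}(A_i/A_{j_\iota})e^{(\la_i-\la_{j_\iota})z}$. Summing over arcs, the leading part $\sum_\iota \la_{j_\iota}(w_\iota-w_{\iota-1})$ is handled by Abel summation: it equals $\sum_\iota(\la_{j_\iota}-\la_{j_{\iota+1}})(w_\iota-w_0)$, whose modulus is at most $\operatorname{diam}(U)\sum_\iota|\la_{j_\iota}-\la_{j_{\iota+1}}|$. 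After normalizing so that the origin lies inside $U$ (a translation that changes neither the zero count nor $\mathcal{L}(\Lambda)$ nor $\operatorname{diam}(U)$), the direction of the position vector turns monotonically, the dominant frequency cycles through the vertices of $\operatorname{conv}(\Lambda)$ in convex order, and the last sum becomes the perimeter of $\operatorname{conv}(\Lambda)$. I would bound this perimeter by $2\mathcal{L}(\Lambda)$: the convex-hull perimeter is at most the length of any closed tour through the points, and a closed tour is obtained from a shortest open path of length $\mathcal{L}(\Lambda)$ by adjoining a return edge of length $\le\mathcal{L}(\Lambda)$. Dividing by $2\pi$ produces exactly the term $\frac{1}{\pi}\mathcal{L}(\Lambda)\operatorname{diam}(U)$.

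It then remains to show that the two correction sums contribute at most $k-1$. The terms $\frac{1}{2\pi}\Im\sum_\iota\int_{I_\iota}\frac{A_{j_\iota}'}{A_{j_\iota}}\,dz$ record the winding produced by the zeros of the coefficient polynomials and should account for $\sum_j\deg A_j=k-m$ of the allowance, where $m$ is the number of distinct frequencies, while the transition corrections $\frac{1}{2\pi}\Im\sum_\iota\int_{I_\iota}\frac{R_\iota'}{1+R_\iota}\,dz$ should account for the remaining $m-1$. This split is forced by the two consistency checks $\mathcal{L}(\Lambda)=0$ (a single frequency, giving the pure polynomial bound $k-1$) and $A_j\equiv\text{const}$ (giving $k=m$ and the pure exponential bound $m-1$).

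The hard part will be making this correction estimate rigorous. The clean ``one dominant term per arc'' picture degenerates in the transition zones where two terms are comparable: there $1+R_\iota$ is not close to $1$ (and can be small, though never zero since $f\neq0$ on $\partial U$), and, once the polynomial corrections $\log|A_j|$ are present, the dominance order need no longer cycle monotonically, so one must also confirm that $\sum_\iota|\la_{j_\iota}-\la_{j_{\iota+1}}|$ does not overshoot the perimeter. Controlling $\int\frac{R_\iota'}{1+R_\iota}$ across each transition, and proving that the total transition winding is at most $m-1$ rather than merely $O(m)$ or worse, is the main obstacle. I would attack it by selecting the breakpoints with a fixed multiplicative gap, so that on each arc the dominant term exceeds the rest by a definite factor (exactly as in the definition of $G(y,\al)$ above), and by estimating the transition winding through a local two-term model near each breakpoint, where the computation collapses to the explicitly solvable case sketched at the outset.
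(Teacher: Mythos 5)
You should first know how the paper itself handles this statement: it does not prove it at all. Theorem~\ref{th:KY} is imported verbatim from Khovanskii--Yakovenko \cite{KY}, and the proof there follows a completely different route from yours, namely a complex analogue of Rolle's theorem. One orders the frequencies along a shortest polygonal path, successively applies the first-order operators $\frac{d}{dz}-\la_j$ (each conjugate to an honest differentiation after dividing by $e^{\la_j z}$, hence zero-preserving) to annihilate the terms one by one, and uses the complex Rolle inequality of \cite{KY} to bound the number of zeros lost at each of the $k-1$ differentiations by $1$ plus a rotation term proportional to $\operatorname{diam}(U)$ times the relevant frequency gap; summing the losses along the shortest path gives exactly $k-1+\frac 1\pi \mathcal{L}(\Lambda)\operatorname{diam}(U)$. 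No argument principle and no dominance decomposition are needed.

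As for your proposal, it contains genuine gaps, the main one being the one you flag yourself: the entire content of the theorem sits in the claim that the two correction sums contribute at most $k-1$, and you do not prove it --- you offer only consistency checks and a two-term local model. This is not a technicality that can be deferred. The partial integrals $\Im\int_{I_\iota} A_{j_\iota}'/A_{j_\iota}\,dz$ over arcs are not individually controlled by $\deg A_{j_\iota}$ (only the full closed-contour integral counts zeros), and $|1+R_\iota|$ can be arbitrarily small at breakpoints, where moreover three or more terms may be comparable simultaneously, so the explicitly solvable two-term model does not govern the transition winding. In addition, the structural claim feeding your leading term is false as stated: even for constant coefficients $A_j\equiv a_j$, the dominance cells $\{z:\;|a_je^{\la_j z}|\ge |a_ie^{\la_i z}|\ \forall i\}$ form a tiling of $\bC$ by convex polygons, and a convex cell can meet the convex curve $\partial U$ in three or more arcs; for instance, take $\la_0=0$ with $|a_0|$ huge, so that its cell is a bounded polygon around the origin, and let $\partial U$ be a circle that exits and re-enters this cell near each of the polygon's corners. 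Hence the dominant index need not cycle through the vertices of $\operatorname{conv}(\Lambda)$ in convex order (it need not even be a vertex of $\operatorname{conv}(\Lambda)$), and $\sum_\iota|\la_{j_\iota}-\la_{j_{\iota+1}}|$ can overshoot the perimeter of $\operatorname{conv}(\Lambda)$. So both the leading term and the remainder estimate in your scheme are unestablished; what you have is a plausible plan whose hardest steps remain open, whereas the Rolle-type induction of \cite{KY} bypasses all of these difficulties.
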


Theorem~\ref{th:KY} immediately implies an estimate on the number of zeros of $y$ in the 
boxes $B_j$ of Lemma~\ref{lm:mult const} and \ref{lm:impPi}. In the case of 
simple characteristic exponents (second part of Proposition~\ref{prop:constant})
$$
\sum\operatorname{diam} B_j\le 2(k-1)\left[\al\Xi+\Theta \log4\right]+4(k-1)\al,
$$
and  (\ref{eq: simple exponents bound})  follows.

\medskip \noindent 
Step $5.$ {Equation with non-constant coefficients in a 
semistrip.} 

In general, solutions of \eqref{eq:pol}  considered in the 
logarithmic 
chart  near its Fuchsian 
singularity have the  form 
\begin{equation}\label{eq: qpl with non-const coeff}
y=\sum_{j}\widetilde{A}_j(z)e^{\la_{j}z},
\end{equation} 
where  
$$
\widetilde{A}_j(z)=\sum_{r=0}^{n_j}a_{j,r}z^{n_j-r}(1+\eps_{j,r}),
$$
and $\eps_{j,r}$ is $2\pi I$-periodic,  $\eps_{j,r}=O(e^z)$ in any semistrip $\Pi_{\al,\beta}= 
\{|\Im z|\le \al, 
\Re z\le \beta\}$ for some  $\beta$ depending on \eqref{eq:pol} only.  To simplify our notation, let us assume that $\beta<0$.

\begin{lm}\label{lm:pert is bdd}
Assume that  $|\eps_{j,r}|<C e^{\Re z}$ in $\Pi_{\al,\beta}$.
Let $A_j(z)=\sum_{r=0}^{n_j}a_{j,r}z^{n_j-r}$ and $W$ be as in the proof of 
Lemma~\ref{lm:mult const}. Then
\begin{equation}
|\log|\widetilde{A}_j/A_j|\le C_{EQ,\al} \;\text{in}\; \Pi_{\al,\beta}\setminus W,
\end{equation}
where $C_{EQ,\al}$ is some constant depending on $\alpha$ and  \eqref{eq:pol} 
only.
\end{lm}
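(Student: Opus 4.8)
The plan is to show that the multiplicative perturbation $\widetilde A_j/A_j$ differs from $1$ by a controllably small term, provided we stay away from the zeros of $A_j$. Writing
\[
\frac{\widetilde A_j(z)}{A_j(z)}=1+\delta_j(z),\qquad
\delta_j(z)=\frac{\widetilde A_j(z)-A_j(z)}{A_j(z)}
=\frac{\sum_{r=0}^{n_j}a_{j,r}\,z^{\,n_j-r}\eps_{j,r}}{A_j(z)},
\]
and noting that $\bigl|\log|1+\delta_j|\bigr|\le\log 2$ as soon as $|\delta_j|\le\tfrac12$, the whole statement reduces to the single pointwise estimate $|\delta_j(z)|\le\tfrac12$ on $\Pi_{\al,\beta}\setminus W$.

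First I would bound the numerator from above. Applying the hypothesis $|\eps_{j,r}|<Ce^{\Re z}$ term by term gives
\[
\Bigl|\sum_{r}a_{j,r}\,z^{\,n_j-r}\eps_{j,r}\Bigr|
\le Ce^{\Re z}\,\widehat A_j(|z|),\qquad
\widehat A_j(t):=\sum_{r=0}^{n_j}|a_{j,r}|\,t^{\,n_j-r}.
\]
Then I would bound the denominator from below. By the definition of $W$ in the proof of Lemma~\ref{lm:mult const}, every $z\in\Pi_{\al,\beta}\setminus W$ satisfies $|z-z_i|\ge|\Re(z-z_i)|>4k\Theta$ for each root $z_i$ of $A_j$, so that $|A_j(z)|\ge|a_{j,0}|(4k\Theta)^{n_j}>0$. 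The quantitative heart of the argument is that the ratio $\widehat A_j(|z|)/|A_j(z)|$ is \emph{bounded} on the whole of $\Pi_{\al,\beta}\setminus W$ by a constant $M$ depending only on \eqref{eq:pol} and $\al$: it is continuous there with denominator bounded away from $0$, and since $|\Im z|\le\al$ forces $|z|\to\infty$ as $\Re z\to-\infty$, both $\widehat A_j(|z|)$ and $|A_j(z)|$ are asymptotic to $|a_{j,0}|\,|z|^{n_j}$, so the ratio tends to $1$ at the open left end of the semistrip and is therefore bounded overall.

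Combining the two estimates gives $|\delta_j(z)|\le CM\,e^{\Re z}\le CM\,e^{\beta}$ throughout $\Pi_{\al,\beta}\setminus W$. Since the decay bound $|\eps_{j,r}|<Ce^{\Re z}$ only improves as we move deeper into the semistrip, I would fix $\beta$ negative enough (depending on \eqref{eq:pol} and $\al$ through $C$ and $M$) that $CM\,e^{\beta}\le\tfrac12$; the finitely many zeros located in the discarded moderate region $\{\beta<\Re z\}\cap\Pi_\al$ — which is relatively compact, hence free of accumulation — are accounted for separately. With this choice $|\delta_j|\le\tfrac12$, whence $\tfrac12\le|\widetilde A_j/A_j|\le\tfrac32$ and $\bigl|\log|\widetilde A_j/A_j|\bigr|\le\log 2=:C_{EQ,\al}$, as claimed.

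The step I expect to be the main obstacle is securing the \emph{lower} half of the logarithmic bound. The upper estimate $|\widetilde A_j/A_j|\le 1+|\delta_j|$ is automatic once $|\delta_j|$ is controlled, but the lower estimate requires $|1+\delta_j|$ to stay away from $0$, i.e. $\widetilde A_j$ must have no zeros in the region; this is precisely what forces the strict inequality $|\delta_j|<1$ and hence the passage to a sufficiently negative $\beta$. A secondary technical point is the uniformity of the ratio bound $M$ over the \emph{non-compact} set $\Pi_{\al,\beta}\setminus W$, which is exactly what the removal of $W$ (giving a uniform lower bound on $|A_j|$) together with the left-end asymptotics are designed to provide.
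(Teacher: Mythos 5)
Your overall skeleton matches the paper's: both proofs bound the numerator by $|\widetilde{A}_j-A_j|\le Ce^{\Re z}\mathring{A}_j$, where $\mathring{A}_j$ is your $\widehat{A}_j$, and then control the ratio $\mathring{A}_j/|A_j|$ off $W$. But there is a genuine gap at exactly the step you call secondary: your constant $M$ is produced by a soft argument (continuity on the bounded part of $\Pi_{\al,\beta}\setminus W$ plus the asymptotics $\widehat{A}_j(|z|)\sim|A_j(z)|\sim|a_{j,0}|\,|z|^{n_j}$ at the left end), and such an argument can only yield a constant depending on the particular polynomial $A_j$ --- that is, on the individual solution $y$, since the coefficients $a_{j,r}$ and the roots $z_m$ (and hence also the excluded set $W$ and the region $\Pi_{\al,\beta}\setminus W$ itself) vary with the solution. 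The lemma explicitly requires $C_{EQ,\al}$ to depend on $\al$ and equation~\eqref{eq:pol} only, and this uniformity over solutions is what is used afterwards: the box-width bound of Lemma~\ref{lm:pert} and the zero count in the proof of Theorem~\ref{th:main} must hold for all solutions simultaneously with one constant. The paper achieves uniformity by an explicit computation that your compactness argument avoids: factor $A_j(z)=a_{j,0}\prod_m(z-z_m)$, so that $\mathring{A}_j(z)/|A_j(z)|\le\prod_m\frac{|z|+|z_m|}{|z-z_m|}$; off $W$ each $|z-z_m|\ge 4k\Theta$, and splitting the roots according to $|z_m|<2|z|$ versus $|z_m|\ge 2|z|$ gives the bound $Ce^{\Re z}|z|^{\ell}\left(\frac{3}{4k\Theta}\right)^{\ell}3^{k-\ell}$, whose supremum over the semistrip is controlled by $C,\al,k,\Theta$ alone --- no trace of the individual roots survives. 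A compactness argument over the (noncompact) family of all possible root configurations cannot substitute for this; making your argument uniform would amount to re-proving the product estimate.

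The same defect propagates into your handling of $\beta$: to force $|\delta_j|\le\tfrac12$ you shrink $\beta$ until $CMe^{\beta}\le\tfrac12$, but since $M$ is solution-dependent, so is your new $\beta$, whereas the semistrip in the lemma (and in Step 5) is fixed by the equation; moreover the leftover region is then ``accounted for separately,'' which silently weakens the stated conclusion (the bound is asserted on all of $\Pi_{\al,\beta}\setminus W$). With the paper's uniform estimate this issue essentially evaporates: $Ce^{\Re z}|z|^{\ell}$ can be made small by decreasing $\beta$ by an amount determined by $C,\al,k,\Theta$ only, which preserves the required dependence. (To be fair, the need for such smallness to keep $\bigl|\log|1+\delta_j|\bigr|$ finite from below --- i.e., to keep $\widetilde{A}_j$ zero-free off $W$ --- is a point the paper itself glosses over, and you are right to flag it; but in the paper's setting the fix is equation-uniform, while in yours it is not.)
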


\begin{proof}
Let $\mathring{A}(z)=\sum |a_{j,r}||z|^r$. 
Evidently, $\mathring{A}_j(z)\le \prod_m (|z|+|z_m|)$, where $z_m$ are the roots of $A_j(z)$.
Also, $|\widetilde{A_j}-A_j|\le Ce^{\Re z} \mathring{A}_j(z)$.
Therefore,  for $z\in \Pi_{\al,\beta}\setminus W,$ we get 

\begin{eqnarray}
|\log|\widetilde{A}_j/A_j|\le Ce^{\Re z} \frac{\mathring{A}_j(z)}{|A_j(z)|}\le  Ce^{\Re z}\prod \frac{|z|+|z_m|}{\left||z|-|z_m|\right|}\le&\\\nonumber
\le  Ce^{\Re z} |z|^\ell \prod_{|z_m|<2|z|} \frac{1+|z_m|/|z|}{\left||z|-|z_m|\right|}
 \prod_{|z_m|>2|z|} \frac{|z/z_m+1|}{1-|z/z_m|}\le
Ce^{\Re z}|z|^\ell & \left(\frac 3 {4k\Theta}\right)^\ell 3^{k-\ell}.
\end{eqnarray}
Clearly, the latter function can be majorized by some number depending on $C,\al$ and $k,\Theta$ only. 
The constants $C, k, \Theta$ are determined by \eqref{eq:pol}. 
\end{proof}

\begin{rem}
Actually, dependence of $C_{EQ,\al}$ on $\al$ is very simple (as $O(\al^k)$ as $\al\to\infty$), but we do not need this.
\end{rem}

\begin{lm}\label{lm:pert}
In the above notation,  the zeros of $y$ in $\Pi_{\al,\beta}$ lie in at most $k+k^2+k^3$  boxes of total width at most 
$$k^2(k+1)(4\Theta\ln k+4\al\Xi+4\al+4C_{EQ,\al}) + 8k^2\Theta.$$
\end{lm}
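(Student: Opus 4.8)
My plan is to run, almost line for line, the proof of Lemma~\ref{lm:mult const}, now treating the genuine solution $y=\sum_j\widetilde A_j(z)e^{\la_j z}$ of \eqref{eq: qpl with non-const coeff} as a multiplicative perturbation of the constant-coefficient exponential polynomial $\sum_j A_j(z)e^{\la_j z}$ and transferring the covering estimates through Lemma~\ref{lm:pert is bdd}. First I introduce the perturbed pairwise ratios $\widetilde r_{jj'}=|\widetilde A_je^{\la_j z}|/|\widetilde A_{j'}e^{\la_{j'}z}|$ and the perturbed single-term dominance domain $\widetilde G\subset\Pi_{\al,\beta}$, defined as in \eqref{eq:def G(y, al)}. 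By the same pigeonhole observation as in Lemma~\ref{lm:mult const} — if for every pair one term exceeds the other by a factor larger than $k$, then the largest of the $k$ terms dominates the sum of the remaining $k-1$ — the zeros of $y$ lying outside $\widetilde G$ are contained in $\widetilde\Sigma:=\bigcup_{j\neq j'}\widetilde\Sigma^o_{jj'}$ with $\widetilde\Sigma^o_{jj'}=\{|\ln\widetilde r_{jj'}|\le\ln k\}$, while the at most $\min_j n_j\le k$ zeros of $y$ inside $\widetilde G$ are common zeros of the $\widetilde A_j$.

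The heart of the argument is a comparison of values. Lemma~\ref{lm:pert is bdd} gives $\bigl|\log|\widetilde A_j/A_j|\bigr|\le C_{EQ,\al}$ on $\Pi_{\al,\beta}\setminus W$, whence $|\ln\widetilde r_{jj'}-\ln r_{jj'}|\le 2C_{EQ,\al}$ there, and therefore, outside $W$, $\widetilde\Sigma^o_{jj'}\subseteq\{\,|\ln r_{jj'}|\le\ln k+2C_{EQ,\al}\,\}$. In words, the perturbed dominance-complement is swallowed by the \emph{unperturbed} one with the single change that the threshold $\ln k$ is enlarged by $2C_{EQ,\al}$. I want to emphasise that this is only a sup-norm comparison; it lets me keep the unperturbed derivative estimate \eqref{eq:rjj'}, namely $|\partial_u\ln r_{jj'}|\ge|\theta_{jj'}|/2$ outside $W$, so that no derivative control on the perturbations $\eps_{j,r}$ is ever required — Lemma~\ref{lm:pert is bdd} is used purely as a bound on absolute values.

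What remains is the box count, identical to that in Lemma~\ref{lm:mult const} but with $\ln k$ replaced by $\ln k+2C_{EQ,\al}$. Passing to $v$-independent sets absorbs the $v$-variation of $\ln r_{jj'}$ into the terms $\al|\xi_{jj'}\theta_{jj'}|+\al|\theta_{jj'}|$ exactly as before, and the uniform bound $2C_{EQ,\al}$ creates no new $v$-dependence. By \eqref{eq:rjj'} each of the at most $k+1$ components of $\R\setminus W$ then meets the enlarged set $\Sigma_{jj'}$ in an interval of length at most $4\Theta\ln k+4\al\Xi+4\al+4C_{EQ,\al}$; summing over the at most $k^2$ pairs $(j,j')$ yields at most $k^2(k+1)$ boxes, and adjoining the at most $k$ boxes of width $8k\Theta$ covering $W\cap\Pi_\al$ (inside which also lie the finitely many zeros in $\widetilde G$, since the roots of the $\widetilde A_j$ sit in the $W$-neighbourhoods of the roots of the $A_j$) gives the asserted $k+k^2+k^3$ boxes of total width $k^2(k+1)(4\Theta\ln k+4\al\Xi+4\al+4C_{EQ,\al})+8k^2\Theta$.

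I expect the only genuine obstacle to be the comparison step: the key realisation is that the perturbed covering may be dominated by an \emph{enlarged unperturbed} covering, which lets every delicate slope and derivative estimate be inherited from Lemma~\ref{lm:mult const} and reduces the role of Lemma~\ref{lm:pert is bdd} to a pointwise bound. The rest is bookkeeping: verifying that the roots of the perturbed leading polynomials $\widetilde A_j$ (hence the finitely many zeros inside $\widetilde G$) remain inside the $W$-boxes, which holds because the perturbation is uniformly small for $\Re z\le\beta$ with $\beta$ taken sufficiently negative, and tracking the constants so that the excess width collapses into the single summand $4C_{EQ,\al}$ of the stated estimate.
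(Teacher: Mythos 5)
Your proposal is correct and takes essentially the same route as the paper's own proof: both rerun the box-counting argument of Lemma~\ref{lm:mult const} for the perturbed ratios $\tilde r_{jj'}$, using Lemma~\ref{lm:pert is bdd} purely as a pointwise (sup-norm) comparison that enlarges the threshold $\ln k$ by $O(C_{EQ,\al})$ while retaining the unperturbed derivative bound \eqref{eq:rjj'} outside $W$. If anything you are slightly more careful than the paper, which writes $C_{EQ,\al}$ (rather than your $2C_{EQ,\al}$) for the log-ratio comparison and leaves implicit the point that the zeros inside the single-term dominance region fall into the $W$-boxes.
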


\begin{proof}
We repeat the proof of   Lemma~\ref{lm:mult const}. Namely, consider the absolute value $\tilde{r}_{jj'}$ of the ratio of any two terms in 
\eqref{eq: qpl with non-const coeff}. 
  The complement  $\Pi_\al\setminus G(y,\al)$ lies in 
the  union $\Sigma$ of the sets $\tilde{\Sigma}^o_{jj'}=\{|\ln 
\tilde{r}_{jj'}(z)|\le\ln k\}$. 
But, according to Lemma~\ref{lm:pert is bdd}, $|\log \tilde{r}_{jj'}- \log 
r_{jj'}|\le C_{QE,\al}$, where $r_{jj'}$ was defined in the proof of 
Lemma~\ref{lm:mult const}.
So, it is enough to require $|\ln 
\tilde{r}_{jj'}(z)|\le\ln k +C_{EQ,\al}$, i.e. outside $W$
$$
\tilde{\Sigma}^o_{jj'}\subset \tilde{\Sigma}_{jj'}=\{u+Iv\in\Pi_\al\,:\,
|\ln 
r_{jj'}(u)|\le\ln k+\al|\xi_{jj'}\theta_{jj'}|+\al|\theta_{jj'}|+C_{EQ,\al}\}. 
$$ 
Repeating the same arguments as in Lemma~\ref{lm:mult const} with  $\tilde{\Sigma}_{jj'}$ instead of $\Sigma_{jj'}$, we arrive at the 
required estimates. 
\end{proof}
%
%
%
%
%
%
%

\begin{proof}[Proof of Theorem~\ref{th:main}]
Let $y^{(k)}+b_1(z)y^{(k-1)}+...+b_k y=0$ be the reduced form 
(=divided by its 
leading term) of \eqref{eq:pol} in the logarithmic chart near 
its 
Fuchsian singularity. Assume that $b_j(z)$ are bounded by $C$ in  
$\Pi_{\al,\beta}$ 
(The Fuchsian property implies that each $b_j(z)$ tends to some finite limit 
when $z\to\infty $ 
in $ \Pi_{\al,\beta}$).

Example in \cite{Y} immediately following after Corollary 2.7 of this paper,  implies that $y$ has at most $2(k+1)+\frac {k+1} 
{\log(9/4)}\ell C$ zeros  in  
$\Pi_{\al,\beta}$, where 
$$
\ell\le 2k^2(k+1)(4\Theta\ln k+4\al\Xi+4\al+4C_{EQ,\al}) + 
16k^2\Theta+4(k+k^2+k^3)\al,
$$
 is the total 
 perimeter of all boxes appearing in  Lemma~\ref{lm:pert}. 
 
After going back from logarithmic chart to the original coordinate, we obtain   an upper 
bound for the number of zeros of any solution of 
\eqref{eq:pol} in the sector 
$\{|z-p|\le e^\beta, |\arg z|\le\al\}$ at the Fuchsian 
singular point $p$. 

\end{proof}

The sequence of steps 1-5 settles Theorem~\ref{th:main}.

\end{document}